\documentclass{article}

\usepackage[utf8]{inputenc}

\usepackage{amsmath}
\usepackage{amsfonts}
\usepackage{amssymb}

\usepackage{amsmath, amsthm, amssymb, amsfonts}
\usepackage{graphicx, color} 
\usepackage{fancyhdr} 
\usepackage{caption, subcaption} 
\usepackage{enumerate} 
\usepackage{mathrsfs} 
\usepackage{makeidx} 
\usepackage[section]{placeins} 
\usepackage[all]{xy} 
\usepackage{appendix}

\usepackage{verbatim}
\usepackage{moresize}

\usepackage{hyperref}
\hypersetup{pdfpagemode={UseOutlines},
bookmarksopen=true,
bookmarksopenlevel=0,
hypertexnames=false,
colorlinks=true,
citecolor=magenta,
linkcolor=red,
urlcolor=magenta,
pdfstartview={FitV},
unicode,
breaklinks=true,
}


\newtheorem{prop}{Proposition}

\newtheorem{theorem}[prop]{Theorem}
\newtheorem{coro}[prop]{Corollary}
\newtheorem{lema}[prop]{Lemma}
\theoremstyle{definition}
\newtheorem{de}[prop]{Definition}

\newtheorem{obs}[prop]{Remark}

\newtheorem{pregunta}[prop]{\it Question}

\newcommand{\ital}{\mathcal}

\newcommand{\cantor}{2 ^\omega}

\newcommand{\tits}[1]{{\itshape #1}}

\newcommand{\rtt}{\upharpoonright} 

\newcommand{\gorro}{\widehat}

\newcommand{\fin}{[\omega]^{< \omega}}
\newcommand{\dosfin}{2^{< \omega}}

\newcommand{\ada}{\mathcal{A}}
\newcommand{\adb}{\mathcal{B}}
\newcommand{\rest}{\upharpoonright}


\usepackage{cite}

\title{$\mathfrak{c}$-many types of a $\Psi$-space}
\author{Héctor Alonzo Barriga-Acosta, Fernando Hernández-Hernández \\ Posgrado Conjunto en Ciencias Matemáticas UNAM-UMSNH, México}
\begin{document}

\maketitle

\begin{abstract}
We show that for any cardinal $\omega<\kappa \leq \mathfrak{c}$ with $cf(\kappa) > \omega$, there are $\mathfrak{c}$ many AD families whose $\Psi$-spaces are pairwise non-homeomorphic and they can be Luzin families or branch families of $2^\omega$.
\end{abstract}

\section{Introduction}
Denote by $\fin$ the set of finite sets of the natural numbers,  $\omega$. If $\ada$ is an almost disjoint family (AD family, for short) on $\omega$, define the topological space $\Psi (\ada) = (\omega \cup \ada,  \tau)$ as follows: $\omega$ is a discrete subset of $\Psi (\ada)$; basic neighborhoods of a point $x\in \ada$ are of the form $\{ x\} \cup (x\setminus F)$, where $F \in \fin$. 

$\Psi$-spaces have been well studied through the years because they are candidates to give examples or counterexamples of many topological concepts. There are nice properties $\Psi$-spaces satisfy: they are Hausdorff, separable, first countable, locally compact, zero dimensional. For topological and combinatorial aspects of $\Psi$-spaces see \cite{FM} and \cite{M}, respectively. 

Daniel Bernal-Santos and Salvador García-Ferreira wondered if $C_p (\Psi (\ada))$ and $C_p (\Psi (\adb))$ are homeomorphic whenever $\ada$ and $\adb$ are homeomorphic as subspaces of $\cantor$. To understand better the space $C_p (\Psi (\ada))$ they wondered for a more elementary question: 

\begin{pregunta}[Bernal-Santos, García-Ferreira] \label{Q1}
If $X, Y \subseteq \cantor$ are homeomorphic, are $\Psi (\ada_X)$ and $\Psi (\ada_Y)$ homeomorphic?
\end{pregunta}  

Here, $\ada_X$ is the family of branches determined by $X$ as a subspace of $\cantor$.   It is well known that under $\mathsf{MA +\neg CH}$, every set $X\in \cantor$ of size less than the continuum is a $Q$-set, and thus, $\Psi (\ada_X)$ normal (call a family $\ada$ {\it normal} if its $\Psi$-space is normal). Like this, there are many topological properties of $X\subseteq\cantor$ that have effect on the $Psi$-space $\Psi (\ada_X)$. One might think that $\mathsf{MA +\neg CH}$ is a good ingredient to conjecture that the answer is affirmative. However, we answer negatively to Question \ref{Q1} since Theorem \ref{T2} shows that in $\mathsf{ZFC}$ there are different types of spaces $\Psi (\ada_X), \Psi (\ada_Y)$ even when $X$ and $Y$ are homeomorphic.

Recall that an AD family $\ada$ is \tits{Luzin} if it can be enumerated as 
$$
\ada = \langle A_\alpha : \alpha < \omega_1 \rangle
$$ 
in such way that $\forall \alpha < \omega_1 \forall n\in \omega \; \; (|\{ \beta < \alpha : A_\alpha \cap A_\beta \subseteq n  \}|< \omega)$. Normal and Luzin families are in some sense ``orthogonal", precisely because the normality of their $\Psi$-spaces holds in the former and breaks down badly in the last. We show in Theorem \ref{T1} that in $\mathsf{ZFC}$ there are different types of $\Psi$-spaces for Luzin families.

Focusing on AD families of size $\omega_1$, Michael Hru\v{s}\'ak formulated the following question:

\begin{pregunta}[Hru\v{s}\'ak] \label{Q2}
Is consistent that there is an almost disjoint family $\ada$  such that $\Psi (\ada) \simeq \Psi (\adb)$, whenever $\adb\subseteq\ada$ and $\vert\ada\vert=\vert\adb\vert$\,?
\end{pregunta}

Observe that $2^\omega < 2^{\omega_1}$ (in particular $\mathsf{CH}$), implies that the answer to Question \ref{Q2} is negative by the simple fact that given $\ada$ of size $\omega_1$, there are only $\mathfrak{c}$ many subspaces $\Psi(\adb)$ for which $\Psi(\ada) \simeq \Psi(\adb)$ (there are only $\mathfrak{c}$ permutations of $\omega$), and there are $2^{\omega_1}$ many subsets of $\ada$ of size $\omega_1$. We believe that it is a very interesting question; we conjecture that the answer is no but our methods do not work to solve it.

\section{Basic facts}

Our notation is standard and follows closely \cite{FM} and \cite{M}. We use, like them, $f(A)$ to denote the evaluation of the function $f$ at the point $A$ in its domain while $f[A]$ denotes the image of the set $A$ under the function $f$. For sets $A$ and $B$, we say that $A\subseteq^* B$, in words that  $A$ is almost contained in $B$, if $A\setminus B$ is a finite set.  Likewise, $A=^*B$ if and only if $A\subseteq^* B$ and $B\subseteq^* A$. If $X \subseteq \cantor$, $x\in \cantor$, we denote 
\[
\gorro{x\downarrow n} = \{ x\rest k \in \dosfin: n\leq k \}, \;
\]
$\gorro{x} :=  \gorro{x\downarrow 0}, \; \gorro{X} = \bigcup_{x\in X} \gorro{x}$ and $\mathcal{A}_X = \{ \gorro{x} : x\in X \}$. Families of the form $\ada_X$, where $X\subseteq \cantor$, are canonical AD families on $\dosfin$, and there are of any size below the continuum. Under a correspondence between $\omega$ and $\dosfin$ we can consider $\Psi(\ada_X)$. Perhaps the families $\ada_X$ were first studied by F. Tall \cite{T} when he showed that if $X\subseteq  \cantor$, then $X$ is a $Q$-set if and only if $\Psi (\ada_X)$ is normal.

   The following Lemma shows how a homeomorphism between $\Psi$-spaces looks like.

\begin{lema}\label{L0}
Let $\ital{A}, \ital{B}$ almost disjoint families on $\omega$, and $H: \Psi (\mathcal{A}) \to \Psi (\mathcal{B})$ be bijective. Then $H$ is a homeomorphism if and only if $H[\omega] = \omega$ and for every $x\in \mathcal{A}$, $H[x]$ and $H(x)$ as subsets $\omega$ are almost equal. 
\end{lema}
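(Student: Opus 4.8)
The plan is to exploit the fact that in any $\Psi$-space the isolated points are exactly the elements of $\omega$: each $n\in\omega$ is isolated because $\{n\}$ is open, while each $x\in\mathcal{A}$ is infinite and every basic neighborhood $\{x\}\cup(x\setminus F)$ meets $\omega$ in an infinite set, so $x$ is not isolated. Since a homeomorphism carries isolated points to isolated points in both directions, the equality $H[\omega]=\omega$ is forced as soon as $H$ is a homeomorphism, and conversely it is one of the things we must verify in the other direction; together with bijectivity it yields $H[\mathcal{A}]=\mathcal{B}$, so that $H(x)\in\mathcal{B}$ whenever $x\in\mathcal{A}$ and the phrase ``$H[x]$ and $H(x)$ are almost equal'' is meaningful.

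For the forward implication, assume $H$ is a homeomorphism; the observation above gives $H[\omega]=\omega$. Fix $x\in\mathcal{A}$. I would read off the almost-equality from continuity at the two relevant points. Continuity of $H$ at $x$, applied to the basic neighborhood $\{H(x)\}\cup H(x)$ of $H(x)$, produces a finite $G$ with $H[x\setminus G]\subseteq H(x)$ (the image lies in $\omega$ and hence cannot contain the point $H(x)\in\mathcal{B}$), so that $H[x]\subseteq^* H(x)$. Applying the same argument to $H^{-1}$ at $H(x)$ gives $H(x)\subseteq^* H[x]$. Hence $H[x]=^* H(x)$, as desired.

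For the converse, assume $H$ is a bijection with $H[\omega]=\omega$ and $H[x]=^* H(x)$ for every $x\in\mathcal{A}$. Continuity need only be checked at the non-isolated points, since every map is continuous at an isolated point. To show $H$ is continuous at $x\in\mathcal{A}$, fix a basic neighborhood $\{H(x)\}\cup(H(x)\setminus F)$ with $F$ finite. The set $S=(H[x]\setminus H(x))\cup(H[x]\cap F)$ is finite, because $H[x]\setminus H(x)$ is finite by almost-equality and $F$ is finite; letting $G=H^{-1}[S]$, a finite subset of $x$, one computes $H[x\setminus G]=H[x]\setminus S=(H[x]\cap H(x))\setminus F\subseteq H(x)\setminus F$, whence $H[\{x\}\cup(x\setminus G)]\subseteq\{H(x)\}\cup(H(x)\setminus F)$. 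Thus $H$ is continuous. For $H^{-1}$ I would note that applying the injection $H^{-1}$ to $H[x]=^* H(x)$ yields $H^{-1}[H(x)]=^* H^{-1}[H[x]]=x=H^{-1}(H(x))$, so the hypotheses are perfectly symmetric and the identical argument gives continuity of $H^{-1}$.

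The routine part is the finite-set bookkeeping; the only point to be careful about is that almost-equality of subsets of $\omega$ is preserved by the injections $H$ and $H^{-1}$ (symmetric differences are carried bijectively to symmetric differences), which is exactly what lets the converse run symmetrically and what guarantees that the set $G$ above is finite. I do not expect a genuine obstacle, since the whole statement is an unpacking of the neighborhood bases together with the preservation of isolated points.
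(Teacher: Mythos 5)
Your proof is correct and follows essentially the same route as the paper's: the forward direction extracts $H[\omega]=\omega$ from preservation of isolated points and reads off $H[x]\subseteq^* H(x)$ from continuity at $x$ (with the reverse inclusion from continuity of $H^{-1}$, which is the paper's ``$H$ is open''), and the converse verifies continuity at points of $\mathcal{A}$ by finite bookkeeping and invokes symmetry for $H^{-1}$. Your write-up is somewhat more explicit than the paper's in the steps it leaves to ``similar arguments,'' but there is no substantive difference.
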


\begin{proof}
$\Rightarrow )$ Since $H$ must send isolated points to isolated points, it is clear that $H[\omega] = \omega$. Now, let $x\in \mathcal{A}$ and $\{ H(x)\} \cup (H(x) \setminus F)$ be a neighborhood of $H(x)$, where $F\in \fin$. By continuity, there is $F'\in \fin$ such that $H(x) \cup H[x\setminus F'] = H[\{ x\} \cup (x\setminus F')]  \subseteq \{ H(x)\} \cup (H(x) \setminus F)$. So, $H[x] \setminus F'=H[x\setminus F'] \subseteq H(x) \setminus F$. Use the fact that $H$ is open and similar arguments to get $H[x] \supseteq^* H(x)$. 
\\ \\
$\Leftarrow )$ We will see that $H$ is continuous; to see that $H$ is open use similar arguments. Let $x\in \mathcal{A}$ and $\{ H(x) \} \cup (H(x) \setminus F )$ be a neighborhood of $H(x)$, where $F\in \fin$. Since $H[x] =^* H(x)$, there is $F' \in \fin$ such that $H[x\setminus F'] =H[x]\setminus F' \subseteq H(x)\setminus F$. So, $H[\{ x\} \cup (x\setminus F')] \subseteq \{ H(x) \} \cup (H(x) \setminus F)$.
\end{proof}

For $s\in \dosfin$, let $\langle s \rangle = \{ t\in \dosfin : s \subseteq t \}$ and $[\langle s \rangle ] = \{ x\in \cantor : s \subseteq x \}$.

\begin{lema}
Let $X\subseteq \cantor$ be a set of size $\kappa$, $cf(\kappa) > \omega$. Then there are infinitely many $n\in \omega$ for which there are $s, t\in 2^n$ such that $|[\langle s \rangle] \cap X| = \kappa = |[\langle t \rangle ] \cap X|$.
\end{lema}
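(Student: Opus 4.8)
The plan is to analyze, inside the binary tree $\dosfin$, the nodes whose cylinder captures a full-size piece of $X$. For $s\in\dosfin$ write $X_s = [\langle s\rangle]\cap X$ and call $s$ \emph{big} if $|X_s| = \kappa$; set $T = \{s\in\dosfin : s\text{ is big}\}$. What we want is precisely that $|T\cap 2^n|\geq 2$ for infinitely many $n$, since two distinct big nodes at level $n$ furnish the desired $s,t\in 2^n$.

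First I would record two structural facts about $T$. The empty string is big because $|X|=\kappa$. More importantly, every big node has a big child: if $s$ is big then $X_s = X_{s^\frown 0}\cup X_{s^\frown 1}$ is a disjoint union, and since $\kappa$ is infinite it cannot be the sum of two cardinals both $<\kappa$ (the sum would equal their maximum, or be finite, hence $<\kappa$), so at least one child $s^\frown i$ satisfies $|X_{s^\frown i}| = \kappa$. Thus $T$ is an infinite subtree of $\dosfin$ with no dead ends, and in particular $|T\cap 2^n|\geq 1$ for every $n$.

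Then I would argue by contradiction: suppose $|T\cap 2^n|\geq 2$ holds only finitely often. Then there is $N$ with $|T\cap 2^n| = 1$ for all $n\geq N$; write $s_n$ for the unique big node at level $n\geq N$. By the previous step $s_{n+1}$ is the unique big child of $s_n$, so the sibling $\sigma_n$ of $s_{n+1}$ (the other child of $s_n$) has $|X_{\sigma_n}|<\kappa$. Setting $b = \bigcup_{n\geq N} s_n\in\cantor$, every $x\in X_{s_N}$ with $x\neq b$ must first leave the branch $b$ at some level, landing in one of these small siblings; hence $X_{s_N}\subseteq \{b\}\cup\bigcup_{n\geq N} X_{\sigma_n}$.

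The crux, and the only place the hypothesis $cf(\kappa)>\omega$ enters, is the final cardinal estimate: the right-hand side is a single point together with a countable union of sets of size $<\kappa$, and because $cf(\kappa)>\omega$ the supremum of countably many cardinals below $\kappa$ is again below $\kappa$, so the whole union (hence $X_{s_N}$) has size $<\kappa$, contradicting $s_N$ being big. I expect this to be the step that most needs care, and it is genuinely where the cofinality assumption is indispensable: for $\kappa=\aleph_\omega$ one could spread $X$ along a single branch with siblings of sizes $\aleph_0,\aleph_1,\dots$, leaving exactly one big node per level and defeating the conclusion. Concluding, $|T\cap 2^n|\geq 2$ for infinitely many $n$, as required.
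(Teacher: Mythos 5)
Your proof is correct and follows essentially the same route as the paper's: negate the conclusion to get a unique big node per level (eventually), observe that these nodes form a single branch, and use $cf(\kappa)>\omega$ to conclude that the countably many pieces of $X$ falling off that branch, each of size $<\kappa$, cannot cover a set of size $\kappa$. If anything you are slightly more careful than the paper, which states the negation as ``a unique big node at \emph{every} level'' rather than ``at all but finitely many levels''; your explicit reduction to a tail $n\geq N$ closes that small gap.
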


\begin{proof}
Suppose for a contradiction that for every $ n\in \omega$ there is a unique $s_n\in 2^n$ such that $X_n =[\langle s_n \rangle] \cap X$ has size $\kappa$. Let $Y_n = X \setminus X_n$. Notice that $|Y_n|< \kappa$, and since $cf(\kappa) > \omega$, $Y = \bigcup_{n\in \omega} Y_n$ has size less than $\kappa$. This is a contradiction because $ X \setminus Y = \bigcap_{n\in \omega} X_n \subseteq  \bigcap_{n\in \omega} [\langle s_n \rangle]$ has size $\kappa$ and it contains at most one point.  
\end{proof}

In the following, we are going to consider AD families of a fixed size $\kappa \leq \mathfrak{c}$, with $cf(\kappa) > \omega$.

Notice that by the previous Lemma, one can actually get infinitely many $n\in \omega$ for which there is $s\in 2^n$ such that $|[\langle s^\frown 0 \rangle] \cap X| = \kappa = |[\langle s^\frown 1 \rangle ] \cap X|$. For an AD family $\ada$ on $\omega$, we obtain the next observation by considering $\{ \chi (A) : A \in \ada \} \subseteq \cantor$, where $\chi$ is the characteristic function.

\begin{obs}\label{L1}
Let $\mathcal{A}$ be an $AD$ family of size $\kappa$ with $cf(\kappa)>\omega$. Then $$\forall n\in \omega\ \exists m > n (|\{ x \in \ada : m \in x  \}|=|\{ x \in \ada : m \notin x  \}| = \kappa).$$
\end{obs}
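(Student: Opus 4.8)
The plan is to reduce the statement to the strengthened form of the previous Lemma by passing to characteristic functions. First I would set $X = \{\chi(A) : A \in \ada\} \subseteq \cantor$, where $\chi$ is the characteristic function. Since $\chi$ is injective on subsets of $\omega$, distinct members of $\ada$ yield distinct points of $X$, so $|X| = |\ada| = \kappa$ and $cf(\kappa) > \omega$. Thus $X$ falls exactly under the hypotheses of the previous Lemma and of the strengthening recorded just before the statement.

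Next I would invoke that strengthening: there are infinitely many $k \in \omega$ for which there is $s \in 2^k$ with $|[\langle s^\frown 0 \rangle] \cap X| = \kappa = |[\langle s^\frown 1 \rangle] \cap X|$. The crucial translation is that for $x = \chi(A)$ the conditions $s^\frown 1 \subseteq x$ and $s^\frown 0 \subseteq x$ simply read off the value of $x$ at coordinate $k = |s|$: one has $x(k) = 1$ iff $k \in A$, and $x(k) = 0$ iff $k \notin A$. Hence $[\langle s^\frown 1\rangle] \cap X \subseteq \{\chi(A) : A \in \ada,\ k \in A\}$ and $[\langle s^\frown 0\rangle] \cap X \subseteq \{\chi(A) : A \in \ada,\ k \notin A\}$, and the two inclusions force $|\{A \in \ada : k \in A\}| = \kappa$ and $|\{A \in \ada : k \notin A\}| = \kappa$. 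So the single integer $m = k$ witnesses the desired splitting of membership.

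Finally, since the set of such heights $k$ is infinite, for any prescribed $n \in \omega$ I can choose one with $k > n$ and take $m = k$, which gives the $\forall n\, \exists m > n$ form of the statement. The only genuinely substantive point is the strengthening from ``two large nodes at a common height'' to ``one node both of whose immediate successors are large'', and this is where $cf(\kappa) > \omega$ does real work (a countable union of $<\kappa$-sized pieces stays of size $<\kappa$). Should I need to argue it, I would note that every $\kappa$-sized node $[\langle s\rangle] \cap X$ has a $\kappa$-sized immediate successor, so the $\kappa$-sized nodes form a finitely branching infinite subtree; were there only finitely many splitting nodes, then above some height the $\kappa$-sized nodes would trace finitely many branches, confining all but fewer than $\kappa$ points of $X$ to finitely many limit points and contradicting $|X| = \kappa$. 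Everything else is the routine coordinate bookkeeping above, so beyond stating this cleanly I expect no real obstacle.
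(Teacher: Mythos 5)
Your proof is correct and follows exactly the route the paper indicates: apply the strengthened form of the preceding Lemma to $X=\{\chi(A):A\in\mathcal{A}\}\subseteq 2^\omega$ and read off membership of $m=|s|$ in $A$ from the value $\chi(A)(|s|)$. Your optional justification of the strengthening (each $\kappa$-sized node has a $\kappa$-sized successor, and finitely many splitting nodes would confine all but fewer than $\kappa$ points of $X$ to finitely many branches, using $cf(\kappa)>\omega$) is also sound and fills in a detail the paper leaves to the reader.
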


\begin{lema}\label{L2}
Let $\ada, \adb$ be $AD$ families of size $\kappa$ with $cf(\kappa)>\omega$ and $h : \ada \to \adb$ be a bijection. Then for all $n\in \omega$ there are $x,y,z \in \ada$ such that 
\begin{enumerate}
\item $max\{x\cap y\} > n \ \wedge\ x\cap y \subsetneq x\cap z;$ and
\item $max\{h(x)\cap h(y)\} > n \ \wedge\  h(x)\cap h(y) \subsetneq h(x)\cap h(z).$
\end{enumerate}   
\end{lema}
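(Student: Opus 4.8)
The plan is to make the two ``$\max>n$'' clauses hold for free by shrinking $\mathcal{A}$ to a subfamily of size $\kappa$, and then to secure the two strict inclusions by a single pigeonhole stabilization followed by one more application of Observation~\ref{L1}. The main obstacle is that the inclusion $x\cap y\subsetneq x\cap z$ must be arranged \emph{simultaneously} with its image version $h(x)\cap h(y)\subsetneq h(x)\cap h(z)$; controlling an inclusion between two intersections directly is awkward, so the key idea is to fix one ``pivot'' element and stabilize the traces on it, which converts the delicate inclusion into an automatic one.

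First I would use Observation~\ref{L1} twice to remove the maximum clauses. Applying it to $\mathcal{A}$ yields $m_0>n$ with $\mathcal{A}_0:=\{x\in\mathcal{A}:m_0\in x\}$ of size $\kappa$; applying it to the AD family $h[\mathcal{A}_0]\subseteq\mathcal{B}$ (still of size $\kappa$ with $cf(\kappa)>\omega$) yields $p_0>n$ such that $\kappa$ many of its members contain $p_0$, and pulling this back through the bijection $h$ gives $\mathcal{C}:=\{x\in\mathcal{A}_0:p_0\in h(x)\}$, again of size $\kappa$. Every $x\in\mathcal{C}$ satisfies $m_0\in x$ and $p_0\in h(x)$, so for any two distinct members one automatically has $\max\{x\cap x'\}\geq m_0>n$ and $\max\{h(x)\cap h(x')\}\geq p_0>n$.

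Next I would fix an arbitrary pivot $x_*\in\mathcal{C}$ and stabilize the traces on it. As $x$ ranges over $\mathcal{C}\setminus\{x_*\}$, the trace $x\cap x_*$ takes only countably many values (finite subsets of $x_*$), so since $cf(\kappa)>\omega$ some finite $F\subseteq x_*$ with $m_0\in F$ is realized by a subfamily $\mathcal{D}$ of size $\kappa$; repeating the argument with the traces $h(x)\cap h(x_*)$ produces a finite $G\subseteq h(x_*)$ with $p_0\in G$ and a subfamily $\mathcal{E}\subseteq\mathcal{D}$ of size $\kappa$ on which $h(x)\cap h(x_*)=G$. The gain is that for all $y,z\in\mathcal{E}$ one has $F=y\cap x_*\subseteq y$ and $F=z\cap x_*\subseteq z$, whence $y\cap x_*=F\subseteq y\cap z$, and symmetrically $h(y)\cap h(x_*)=G\subseteq h(y)\cap h(z)$; thus the required inclusions hold the moment they can be made strict.

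Finally I would force strictness with one more use of Observation~\ref{L1}. Applied to $\mathcal{E}$ with threshold $\max F$ it gives $m_1>\max F$ and a subfamily $\mathcal{E}_1\subseteq\mathcal{E}$ of size $\kappa$ whose members all contain $m_1$; applied to $h[\mathcal{E}_1]$ with threshold $\max G$ it gives $p_1>\max G$ and, after pulling back, a subfamily $\mathcal{E}_2\subseteq\mathcal{E}_1$ of size $\kappa$ with $p_1\in h(x)$ throughout. Picking distinct $y,z\in\mathcal{E}_2$, the point $m_1\in(y\cap z)\setminus F$ gives $F\subsetneq y\cap z$ and $p_1\in(h(y)\cap h(z))\setminus G$ gives $G\subsetneq h(y)\cap h(z)$. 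The triple $(y,x_*,z)$ (playing the roles of $x,y,z$) then witnesses the lemma: $y\cap x_*=F\subsetneq y\cap z$ with $\max F\geq m_0>n$, and $h(y)\cap h(x_*)=G\subsetneq h(y)\cap h(z)$ with $\max G\geq p_0>n$. I expect the only delicate point to be the bookkeeping that keeps every shrinking step of size $\kappa$, which is exactly where $cf(\kappa)>\omega$ is used.
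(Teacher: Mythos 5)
Your proof is correct and follows essentially the same route as the paper's: apply Observation~\ref{L1} to handle the ``$\max > n$'' clauses, fix a pivot (the paper's $y$, your $x_*$), and use $cf(\kappa)>\omega$ to stabilize the traces $x\cap x_*$ and $h(x)\cap h(x_*)$ to fixed finite sets $F$ and $G$, from which the inclusions follow. The one place you go beyond the paper is in justifying strictness --- the paper merely remarks that proper containments can be arranged, while your extra application of Observation~\ref{L1} above $\max F$ and $\max G$ supplies the witnessing points explicitly, which is a welcome addition.
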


\begin{proof}
Fix $n\in \omega$. By Remark \ref{L1}, choose $m>n$ large enough and $\ada' \in [\ada]^{\kappa}$ such that for every $x\in \ada'$, $m\in x$ and $m \in h(x)$. Now, fix $y\in \ada'$ and consider $\{ x\cap y : x \in \ada' \wedge x\neq y\}$. There are $F\in \fin$ and $\ada'' \in [\ada']^{\kappa}$ such that for all $x\in \ada''$, $x\cap y = F$. There are also $G \in \fin$ and $\adb ' \in [h[\ada'']]^{\kappa}$ such that for all $w\in \adb'$, $w\cap h(y) = G$. Let $\ada''' = h^{-1}[\adb']$.

For any $x,z \in \ada'''$, we have $x\cap y = z \cap y$ and $h(x)\cap h(y) = h(z) \cap h(y)$. This implies that $x\cap y \subseteq x\cap z$ and $h(x)\cap h(y) \subseteq h(x) \cap h(z)$. Observe that it is possible to find $x,z$ such that the contentions are proper.
\end{proof}


\begin{de}
Let $\ada, \adb$ be AD families on $\omega$ of size $\kappa$, $h: \ada \to \adb$ bijective. We say that $h$ is of \tits{dense oscillation} if for each $\ada' \in [\ada]^\kappa$ there are $ x, y, z\in \ada'$ such that $|x\cap z \setminus x\cap y| \neq |h(x)\cap h(z)\setminus h(x)\cap h(y)|$.
\end{de}

\begin{prop}\label{P1}
Let $\ada, \adb$ be AD families of size $\kappa$ with $cf(\kappa)>\omega$ and $h: \ada \to \adb$ be of dense oscillation. Then, there is no homeomorphism from $\Psi (\ada)$ to $\Psi (\adb)$ that extends $h$.
\end{prop}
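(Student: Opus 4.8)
The plan is to argue by contradiction, exploiting the characterization of homeomorphisms given by Lemma \ref{L0} together with the uncountable cofinality of $\kappa$. Suppose there were a homeomorphism $H : \Psi(\ada) \to \Psi(\adb)$ extending $h$. By Lemma \ref{L0}, the restriction $\sigma := H \upharpoonright \omega$ is a permutation of $\omega$, and for every $x \in \ada$ we have $H(x) = h(x)$ together with $\sigma[x] =^* h(x)$. For each $x$ put $E_x := \sigma[x] \,\triangle\, h(x)$; this is a finite set, so $E_x \subseteq [0, n_x)$ for some $n_x \in \omega$. My goal is to produce a single subfamily $\ada' \in [\ada]^{\kappa}$ on which $h$ does not oscillate at all, i.e. on which $|x\cap z \setminus x\cap y| = |h(x)\cap h(z)\setminus h(x)\cap h(y)|$ for all $x,y,z$, directly contradicting dense oscillation.

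To build $\ada'$ I first localize the errors. Since $\ada = \bigcup_{m\in\omega}\{x : E_x \subseteq [0,m)\}$ is a countable increasing union and $cf(\kappa) > \omega$, some $m$ satisfies $|\{x : E_x \subseteq [0,m)\}| = \kappa$; fix such an $m$ and call this subfamily $\ada_0$. For $x \in \ada_0$ the sets $\sigma[x]$ and $h(x)$ agree outside $[0,m)$, so writing $T_x := \sigma[x]\setminus [0,m) = h(x)\setminus[0,m)$ we have $\sigma[x] = (\sigma[x]\cap[0,m))\cup T_x$ and $h(x) = (h(x)\cap[0,m))\cup T_x$. Next I pin down the low parts: partitioning $\ada_0$ according to the pair $(\sigma[x]\cap[0,m),\, h(x)\cap[0,m))$ gives a partition into at most $4^m$ pieces, so one piece $\ada_1 \in [\ada_0]^{\kappa}$ has $\sigma[x]\cap[0,m) = S$ and $h(x)\cap[0,m) = S'$ constant for all $x\in\ada_1$.

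It remains to compute on $\ada_1$. For $x,y,z\in\ada_1$, since $\sigma$ is injective it preserves intersections, differences and cardinalities, so $|x\cap z\setminus x\cap y| = |(\sigma[x]\cap\sigma[z])\setminus(\sigma[x]\cap\sigma[y])|$. Substituting $\sigma[x] = S\cup T_x$ (and likewise for $y,z$), the fixed low part $S$ cancels and this set equals $(T_x\cap T_z)\setminus T_y$. The identical computation on the $\adb$-side, using $h(x)=S'\cup T_x$, shows $(h(x)\cap h(z))\setminus(h(x)\cap h(y)) = (T_x\cap T_z)\setminus T_y$ as well. Hence the two difference sets literally coincide, and in particular have equal cardinality, for every triple $x,y,z\in\ada_1$; this contradicts the dense oscillation of $h$ witnessed on $\ada_1\in[\ada]^{\kappa}$.

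The crux of the argument, and the step I expect to be delicate, is precisely this passage from ``almost equal'' to ``equal''. A priori $\sigma[x]=^* h(x)$ only gives that the two difference sets differ by a finite amount bounded by $|E_x\cup E_y\cup E_z|$, which is not enough to equate their cardinalities. What saves the argument is the double refinement: bounding all errors inside a common finite window $[0,m)$ (where $cf(\kappa)>\omega$ is used to keep a $\kappa$-sized subfamily) and then freezing the behaviour of $\sigma$ and $h$ on that window (where a finite partition already suffices). After these reductions the difference sets are genuinely identical rather than merely almost equal, which is exactly what converts the dense oscillation hypothesis into a contradiction.
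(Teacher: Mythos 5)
Your proof is correct and follows essentially the same route as the paper: both use Lemma \ref{L0} to reduce to the finite discrepancy between $\sigma[x]$ and $h(x)$, stabilize that finite data over a $\kappa$-sized subfamily via $cf(\kappa)>\omega$, and then show the oscillation cardinalities agree on that subfamily, contradicting dense oscillation. Your decomposition $\sigma[x]=S\cup T_x$, $h(x)=S'\cup T_x$ is just a cleaner, more explicit rendering of the paper's stabilization of the error sets $F_A$ and $G_A$.
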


\begin{proof}
Suppose for a contradiction that $H: \Psi (\ada) \to \Psi (\adb)$ is a homeomorphism extending $h$. By Lemma \ref{L0}, for every $A\in \ada$, $H[A]=^* H(A)$. So, for $A\in \ada$, consider the finite sets $F_A = \{ n \in A : H(n) \notin H(A) \}$ and $G_A = \{ n\in H(A) : H^{-1}(n) \notin A \}$.

There are $\ada' \in [\ada]^{\kappa}$ and $F,G \in \fin$ such that for all $A\in \ada'$, $F= F_A$ and $G=G_A$.  If $x, y, z \in \ada'$, then $(x\cap z \setminus x\cap y) \cap F = \emptyset $ and 
\[
\Big(\left( H(x)\cap H(z) \right)\setminus \left(H(x)\cap H(y)\right)\Big) \cap G = \emptyset. 
\]
Moreover, $m \in x \setminus F$ implies $H(m) \in H(x)$ and $H(m) \in H(x) \setminus G$ implies $H^{-1}(m) \in x$.  From this, one can deduce that $$|x\cap z \setminus x\cap y| = |H(x)\cap H(z)\setminus H(x)\cap H(y)|,$$ contradicting the dense oscillation property of $H\rest \ada = h$.
\end{proof}


\begin{de}
Let $A, B \subseteq \omega$.
\begin{itemize}
\item $A$ and $B$ are \tits{oscillating} if  
\[
\forall \{ x, y \} \subseteq A\ \forall \{ w, z \} \subseteq B \ (|y-x| \neq |z-w|).
\]

\item $A$ and $B$ are \tits{almost oscillating} if there is $n \in \omega$ such that $  A\setminus n$ and $ B \setminus n$ are oscillating.
\end{itemize}
\end{de}

\begin{prop}\label{P2}
There are $\mathfrak{c}$ many infinite subsets of $\omega$ pairwise almost oscillating.
\end{prop}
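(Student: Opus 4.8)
The plan is to first restate the goal in terms of difference sets and then exhibit an explicit family. For $A\subseteq\omega$ write $D(A)=\{\,|y-x| : \{x,y\}\subseteq A\,\}$ for its set of positive gaps. Unwinding the definitions, $A$ and $B$ are oscillating exactly when $D(A)\cap D(B)=\emptyset$, and almost oscillating exactly when $D(A\setminus n)\cap D(B\setminus n)=\emptyset$ for some $n\in\omega$. Hence it suffices to build $\mathfrak{c}$ many infinite sets whose gap sets become disjoint once a finite initial segment of each set is discarded. I will obtain this from a single doubly-exponential template in which gaps essentially remember the index pair that produced them.

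Concretely, for each $\alpha\in(0,1)$ I would set $A_\alpha=\{\,2^{e_m(\alpha)} : m\ge 2\,\}$, where $e_m(\alpha)=2^{m}+\lfloor \alpha\,2^{m-2}\rfloor$. Two facts drive everything. First, since $2^a-2^b$ (for $a>b$) is the integer whose binary expansion is a block of ones in positions $b,\dots,a-1$, its value determines the pair $(a,b)$; consequently within $A_\alpha$ each gap is realised by a unique pair of elements (so $A_\alpha$ is Sidon), and, more importantly, a gap shared by $A_\alpha$ and $A_\beta$ forces an equality of exponents $e_m(\alpha)=e_{m'}(\beta)$ at the top and $e_n(\alpha)=e_{n'}(\beta)$ at the bottom. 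Second, because $\alpha<1$ we have $e_m(\cdot)\in[2^{m},\,2^{m}+2^{m-2})$, and these windows are pairwise disjoint and increasing in $m$; so an exponent coincidence $e_m(\alpha)=e_{m'}(\beta)$ already forces $m=m'$.

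The separation of branches then comes from the shift $\lfloor\alpha\,2^{m-2}\rfloor$: for fixed $\alpha\neq\beta$ we have $|\alpha-\beta|\,2^{m-2}\ge 1$ for all large $m$, and two reals that differ by at least $1$ have different floors, so $e_m(\alpha)\neq e_m(\beta)$ for every $m\ge m_0(\alpha,\beta)$. Combining the two facts, let $M$ be larger than all of the finitely many elements of $A_\alpha$ and $A_\beta$ of index below $m_0$. Any gap common to $A_\alpha\setminus M$ and $A_\beta\setminus M$ would arise from indices $\ge m_0$ on both sides, hence would demand $e_m(\alpha)=e_m(\beta)$ for some $m\ge m_0$, which we have excluded. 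Thus $A_\alpha\setminus M$ and $A_\beta\setminus M$ are oscillating, and the $\mathfrak{c}$ many infinite sets $A_\alpha$ are pairwise almost oscillating.

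The step I expect to be the real obstacle is the design of the exponents rather than the final bookkeeping: one must place the address $2^{m}$ of level $m$ far enough apart that the windows $[2^{m},2^{m}+2^{m-2})$ never overlap (so distinct levels cannot produce the same exponent), while simultaneously leaving each window wide enough for the shift $\lfloor\alpha\,2^{m-2}\rfloor$ to eventually separate continuum many reals. The doubly-exponential scale together with the floor of $\alpha\,2^{m-2}$ is precisely what reconciles these two competing demands; once that template is fixed, uniqueness of binary expansions reduces the oscillation requirement to the elementary fact that distinct reals have eventually distinct dyadic truncations.
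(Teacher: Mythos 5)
Your proof is correct, and it takes a genuinely different route from the paper's. The paper builds two oscillating sets by a greedy back‑and‑forth (each new element's gap to its predecessor exceeds the diameter of the other set so far), observes that this can be done hereditarily inside any infinite set, arranges the resulting sets into a Cantor tree, and takes a pseudointersection along each branch; two pseudointersections are almost contained in oscillating nodes past the splitting level, hence almost oscillating. You instead give a completely explicit family $A_\alpha=\{2^{e_m(\alpha)}:m\ge 2\}$ indexed by $\alpha\in(0,1)$, and reduce the disjointness of gap sets to the uniqueness of the binary expansion of $2^a-2^b$ together with the disjointness of the windows $[2^m,2^m+2^{m-2})$; the eventual separation $\lfloor\alpha 2^{m-2}\rfloor\neq\lfloor\beta 2^{m-2}\rfloor$ then kills every common gap past a finite stage. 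All the steps check out (in particular, a common gap of the truncated sets forces a top‑exponent coincidence $e_m(\alpha)=e_{m'}(\beta)$ with both indices large, the windows force $m=m'$, and the floor separation gives the contradiction; and since no infinite set is almost oscillating with itself, your argument also shows the $A_\alpha$ are pairwise distinct, so you really get $\mathfrak{c}$ many sets). What each approach buys: yours is constructive and self‑contained, needing no appeal to pseudointersections or a tree of nested choices; the paper's argument is less explicit but isolates the hereditary fact that any infinite subset of $\omega$ contains two infinite oscillating subsets, which is a reusable structural observation. Both correctly interpret the quantifier $\forall\{x,y\}\subseteq A$ as ranging over two‑element subsets, which is clearly the intended reading.
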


\begin{proof}
From $\omega$, we construct first two oscillating sets $A=\bigcup A_n, B= \bigcup B_n$. Fix $A_0 = \{ 0\}, B_0 =\{ 1\}$. Suppose constructed $A_n=\{ a_0, \ldots, a_n\}, B_n=\{ b_0, \ldots, b_n\}$ oscillating. Let $a_{n+1}\in \omega$ such that $a_{n+1} - a_n > b_n - b_0$ and $b_{n+1}\in \omega$ such that $b_{n+1} - b_n > a_{n+1} - a_0$. Observe that $A_{n+1} = A_n \cup \{ a_{n+1} \}, B_{n+1} = B_n \cup \{ b_{n+1} \}$ are oscillating as well as will be $A$ and $B$.

Notice that the construction is hereditary. That is, for any $X\in [\omega]^\omega$, there are $A,B \in [X]^\omega$ oscillating. This allows to define a Cantor tree induced by this partitions. Each branch of the Cantor set, $f\in2^\omega$, represents a decreasing sequence of infinite sets of naturals $\langle A_{f\rtt n} : n\in \omega \rangle$ such that for any other branch $g\in2^\omega$, we have that $A_{f\rtt k}, A_{g\rtt l}$ are oscillating whenever  $k,l > \Delta (f,g)$. Now, for every branch $\langle A_{f\rtt n} : n\in \omega \rangle$ consider a pseudointersection $P_f$ of $\{ A_{f\rtt n} : n\in \omega \}$. Observe that for any two branches $\langle A_{f\rtt n} : n\in \omega \rangle$, $\langle A_{g\rtt n} : n\in \omega \rangle$, their pseudointersections $P_{f}$, $P_{g}$ are almost oscillating. 
\end{proof}

\begin{coro}\label{C1}
Let $\ada, \adb$ be AD families and $h: \ada \to \adb$ a bijection. If $A=\{ |x\cap y| : x,y \in \ada \}$ and $B=\{ |x\cap y| : x,y \in \adb \}$ are almost oscillating, there is $\ada' \in [\ada]^{\kappa}$ such that $h \rest \ada'$ is of dense oscillation.
\end{coro}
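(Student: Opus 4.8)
The plan is to show that $h$ is already of dense oscillation on all of $\ada$, so that $\ada'=\ada$ (which lies in $[\ada]^{\kappa}$) witnesses the conclusion. Fixing a threshold $n_0$ with $A\setminus n_0$ and $B\setminus n_0$ oscillating, I must produce, for an \emph{arbitrary} $\ada''\in[\ada]^{\kappa}$, a triple $x,y,z\in\ada''$ with $|x\cap z\setminus x\cap y|\neq|h(x)\cap h(z)\setminus h(x)\cap h(y)|$. The idea is to rerun the proof of Lemma \ref{L2} inside $\ada''$, but to first secure large common intersections so that the four relevant cardinalities all lie above $n_0$; the oscillating hypothesis will then separate the two set-difference sizes, once I also arrange a proper nesting on the $\ada$-side.

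First I would strengthen the opening move of Lemma \ref{L2}. By iterating Remark \ref{L1} alternately on $\ada''$ and on $h[\ada'']$ (each an AD family of size $\kappa$ with $cf(\kappa)>\omega$, so Remark \ref{L1} applies at every stage and keeps the surviving subfamily of size $\kappa$), I obtain $\ada_*\in[\ada'']^{\kappa}$ and finite sets $D_A,D_B$ with $|D_A|,|D_B|\geq n_0$ such that $D_A\subseteq x$ and $D_B\subseteq h(x)$ for every $x\in\ada_*$. Fixing $y\in\ada_*$ and pigeonholing the finite sets $\{x\cap y:x\in\ada_*\setminus\{y\}\}$ (there are only countably many and $cf(\kappa)>\omega$), I get $\ada_1\in[\ada_*]^{\kappa}$ on which $x\cap y$ is a constant $F\supseteq D_A$; pigeonholing $\{h(x)\cap h(y):x\in\ada_1\}$ the same way gives $\ada_2\in[\ada_1]^{\kappa}$ on which $h(x)\cap h(y)$ is a constant $G\supseteq D_B$. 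In particular $|F|,|G|\geq n_0$, and for all $x,z\in\ada_2$ we have $F=x\cap y\subseteq x\cap z$ and $G=h(x)\cap h(y)\subseteq h(x)\cap h(z)$.

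The step I expect to be the main obstacle is ruling out the degenerate possibility that every pair in $\ada_2$ meets in exactly $F$, since then all the set-differences on the $\ada$-side vanish. Here Remark \ref{L1} saves the day: if $x\cap z=F$ for all distinct $x,z\in\ada_2$, then $\{x\setminus F:x\in\ada_2\}$ is pairwise disjoint, so any $m>\max F$ lies in at most one $x\setminus F$; this contradicts Remark \ref{L1}, which furnishes some $m>\max F$ belonging to $\kappa$-many members of $\ada_2$. Hence there are distinct $x,z\in\ada_2$ with $x\cap z\supsetneq F$. (This argument also makes rigorous the claim in the proof of Lemma \ref{L2} that the containments can be taken proper.)

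Finally I would assemble the triple $(x,y,z)$ from such a pair. Writing $d_A=|x\cap z|-|F|>0$ and $d_B=|h(x)\cap h(z)|-|G|\geq 0$ for the two set-difference cardinalities (the nesting makes these genuine subtractions), there are two cases. If $d_B=0$ then $d_A\neq d_B$ immediately. If $d_B>0$ then $\{|F|,|x\cap z|\}$ is a two-element subset of $A\setminus n_0$ and $\{|G|,|h(x)\cap h(z)|\}$ a two-element subset of $B\setminus n_0$, so the oscillating property yields $d_A=\big||x\cap z|-|F|\big|\neq\big||h(x)\cap h(z)|-|G|\big|=d_B$. In either case $(x,y,z)$ witnesses the required inequality, and since $\ada''$ was an arbitrary member of $[\ada]^{\kappa}$, the map $h=h\rest\ada$ is of dense oscillation, so $\ada'=\ada$ works.
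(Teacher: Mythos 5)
Your proof is correct and rests on the same three ingredients as the paper's: iterating Remark \ref{L1} to push the relevant intersections above the oscillation threshold, the pigeonhole construction of Lemma \ref{L2} to produce a nested triple $x\cap y\subseteq x\cap z$ and $h(x)\cap h(y)\subseteq h(x)\cap h(z)$, and the oscillating hypothesis to separate the two differences. The structural difference is where the shrinking happens: the paper shrinks once at the outset to an $\ada'$ all of whose members contain a fixed $F$ with $|F|>n$ (and whose images contain $G$), so that every $\ada''\in[\ada']^{\kappa}$ automatically inherits large intersections and Lemma \ref{L2} can be quoted as a black box; you instead redo the shrinking inside each arbitrary $\ada''\in[\ada]^{\kappa}$. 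This buys the strictly stronger conclusion that $h$ itself is of dense oscillation, i.e.\ $\ada'=\ada$ witnesses the corollary; with that version, Corollary \ref{C2} would follow by applying Proposition \ref{P1} directly to $H$, with no need to pass to the subspace $Z$. Two further points in your favor: your disjointness argument via Remark \ref{L1} supplies a rigorous proof of the properness claim that Lemma \ref{L2} only asserts in passing, and your case split on $d_B=0$ handles the possibility that the containment on the $\adb$ side fails to be proper, a degenerate case the paper's corollary proof silently absorbs by taking both containments proper in Lemma \ref{L2}.
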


\begin{proof}
Let $n \in \omega$ such that $A\setminus n$ and $B \setminus n$ are oscillating. By iterating finitely many steps of Remark \ref{L1}, we can get $\ada' \in [\ada]^{\kappa}$ and  $F, G\in \fin$, with $|F|, |G| > n$, such that for every $x \in \ada'$, $F \subseteq x$ and $G \subseteq h(x)$. If $\ada'' \in [\ada']^\kappa$, apply Lemma \ref{L2} to $h\rest_{\ada''} : \ada'' \to h[\ada'']$ and get $x,y,z \in \ada''$ such that $x\cap y \subseteq x \cap z$ and $h(x)\cap h(y) \subseteq h(x) \cap h(z)$. Thus, there are $a_0,a_1 \in A\setminus n$ and $ b_0, b_1 \in B \setminus n$ such that $|x\cap z \setminus x\cap y| = a_0 -a_1  \neq b_0 -b_1 = |h(x)\cap h(z)\setminus h(x)\cap h(y)|.$
\end{proof}

\begin{coro}\label{C2}
Let $\ada, \adb$ be AD families and $h: \ada \to \adb$ a bijection. If $\{ |x\cap y| : x,y \in \ada \}$ and $\{ |x\cap y| : x,y \in \adb \}$ are almost oscillating, there is no homeomorphism from $\Psi (\ada)$ to $\Psi (\adb)$ that extends $h$.
\end{coro}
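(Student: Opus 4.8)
The plan is to reduce the statement to the two tools already built, namely Corollary \ref{C1} and Proposition \ref{P1}, by means of a subspace argument. I would assume toward a contradiction that there is a homeomorphism $H : \Psi (\ada) \to \Psi (\adb)$ with $H \rest \ada = h$. Since the hypothesis that $\{ |x\cap y| : x,y \in \ada \}$ and $\{ |x\cap y| : x,y \in \adb \}$ are almost oscillating is exactly the one of Corollary \ref{C1}, I would first invoke that corollary to obtain $\ada' \in [\ada]^{\kappa}$ for which $h \rest \ada'$ is of dense oscillation. Writing $\adb' = h[\ada']$ and using that $h$ is a bijection, we get $\adb' \in [\adb]^{\kappa}$, so both $\ada'$ and $\adb'$ are AD families of size $\kappa$ with $cf(\kappa) > \omega$, as demanded by Proposition \ref{P1}.

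The heart of the argument is then to show that $H$ restricts to a homeomorphism between the smaller $\Psi$-spaces. I would note that on the underlying set $\omega \cup \ada'$ the subspace topology inherited from $\Psi (\ada)$ agrees with the intrinsic topology of $\Psi (\ada')$: the points of $\omega$ remain isolated, and intersecting a basic neighborhood $\{ x \} \cup (x \setminus F)$ of a point $x \in \ada'$ with $\omega \cup \ada'$ returns the same set, because $x \setminus F \subseteq \omega$ and the only non-isolated point it contains is $x$ itself. By Lemma \ref{L0} we have $H[\omega] = \omega$, and since $H$ extends $h$ we have $H[\ada'] = \adb'$; hence $H[\omega \cup \ada'] = \omega \cup \adb'$, and the restriction $H \rest (\omega \cup \ada')$ is a homeomorphism between these two subspaces, which are canonically $\Psi (\ada')$ and $\Psi (\adb')$. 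This restriction extends $h \rest \ada'$.

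Finally, I would apply Proposition \ref{P1} to $\ada'$, $\adb'$ and $h \rest \ada'$: because $h \rest \ada'$ is of dense oscillation, no homeomorphism from $\Psi (\ada')$ to $\Psi (\adb')$ can extend it, which contradicts the restriction produced in the previous step and completes the argument. I expect the only genuinely delicate point to be the verification that $\Psi (\ada')$ embeds as a topological subspace of $\Psi (\ada)$, so that a global homeomorphism really does restrict to a homeomorphism of the subspaces; once that is in place, the chaining of Corollary \ref{C1} into Proposition \ref{P1} is purely formal.
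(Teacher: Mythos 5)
Your proposal is correct and follows essentially the same route as the paper: invoke Corollary \ref{C1} to extract $\ada'$ on which the restriction of $h$ is of dense oscillation, observe that the global homeomorphism restricts to a homeomorphism of the corresponding $\Psi$-subspaces, and then contradict Proposition \ref{P1}. The only cosmetic difference is that you restrict to $\omega\cup\ada'$ while the paper restricts to $\ada'\cup\bigcup_{A\in\ada'}A$; your choice is if anything slightly cleaner, since $H[\omega]=\omega$ makes the image of the subspace transparent.
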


\begin{proof}
If $H:\Psi (\ada) \to \Psi (\adb)$ is such homeomorphism, by Corollary \ref{C1} there is $\ada' \in [\ada]^{\kappa}$ such that $H\rest_{\ada'} : \ada' \to H[\ada']$ is of dense oscillation. If $W = \bigcup_{A \in \ada'} A$, then $Z= \ada' \cup W$ is a subspace of $\Psi (\ada)$ and $H\rest_Z$ is a homeomorphism contradicting Proposition \ref{P1}.
\end{proof}

\section{$\mathfrak{c}$ many types of $\Psi$-spaces}

Next we construct $\mathfrak{c}$ many AD families whose $\Psi$-spaces are pairwise non-homeo\-mor\-phic for each of the classes of Luzin families and branch families of $\cantor$. 

\begin{theorem}\label{T1}
There are $\mathfrak{c}$ different Luzin families with non-homeomorphic $\Psi$-spaces. 
\end{theorem}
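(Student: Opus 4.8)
The plan is to build $\mathfrak{c}$ many Luzin families whose pairwise interaction is governed by the oscillation machinery from Section 2, so that Corollary \ref{C2} rules out any homeomorphism between distinct $\Psi$-spaces. The key observation is that Corollary \ref{C2} tells us a homeomorphism extending a bijection $h:\ada\to\adb$ cannot exist whenever the sets of intersection-sizes $\{|x\cap y|:x,y\in\ada\}$ and $\{|x\cap y|:x,y\in\adb\}$ are almost oscillating. However, a homeomorphism need not extend a \emph{given} bijection; so I first need to reduce ``$\Psi(\ada)\simeq\Psi(\adb)$'' to ``some bijection $h:\ada\to\adb$ is induced by a homeomorphism.'' By Lemma \ref{L0}, any homeomorphism $H$ restricts to a bijection $\ada\to\adb$ with $H[A]=^*H(A)$, so it suffices to make the almost-oscillation conclusion hold for \emph{every} bijection $h:\ada\to\adb$ between two distinct families in our list; then no homeomorphism at all can exist.

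The construction would proceed as follows. First I invoke Proposition \ref{P2} to fix $\mathfrak{c}$ many infinite sets $\{P_\xi:\xi<\mathfrak{c}\}$ that are pairwise almost oscillating. The idea is to use $P_\xi$ to prescribe, for the $\xi$-th family $\ada_\xi$, the allowable sizes of pairwise intersections: I want $\{|x\cap y|:x,y\in\ada_\xi\}\subseteq^* P_\xi$ (or at least to be almost oscillating against $\{|x\cap y|:x,y\in\ada_\eta\}$ for $\eta\neq\xi$). Concretely, when building a Luzin family $\ada_\xi=\langle A^\xi_\alpha:\alpha<\omega_1\rangle$ by the usual recursion, at stage $\alpha$ I choose $A^\xi_\alpha$ almost disjoint from the earlier sets, with the Luzin condition $|\{\beta<\alpha:A^\xi_\alpha\cap A^\xi_\beta\subseteq n\}|<\omega$ for all $n$, and \emph{additionally} arrange that each new intersection $A^\xi_\alpha\cap A^\xi_\beta$ has cardinality lying in $P_\xi$. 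Since the $P_\xi$ are infinite and the Luzin requirement only forces the intersections to grow (it never bounds them above by a fixed number), there is enough freedom to steer each $|A^\xi_\alpha\cap A^\xi_\beta|$ into $P_\xi$; an interval-argument on $\omega$ lets me realize any prescribed finite intersection size while keeping almost-disjointness.

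Once the families are built with $\{|x\cap y|:x,y\in\ada_\xi\}$ almost contained in $P_\xi$, the separation is automatic: for $\xi\neq\eta$ the size-sets sit inside $P_\xi$ and $P_\eta$ respectively, which are almost oscillating by Proposition \ref{P2}, hence the two size-sets are themselves almost oscillating. Applying Corollary \ref{C2} to \emph{any} bijection $h:\ada_\xi\to\ada_\eta$ shows no homeomorphism extending $h$ exists; since by Lemma \ref{L0} every homeomorphism $\Psi(\ada_\xi)\to\Psi(\ada_\eta)$ would give such an $h$, the two $\Psi$-spaces are non-homeomorphic. This yields $\mathfrak{c}$ pairwise non-homeomorphic Luzin $\Psi$-spaces. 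Here $\kappa=\omega_1$, so $cf(\kappa)>\omega$ holds and the hypotheses of the Section 2 results are met.

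The main obstacle I expect is the simultaneous satisfaction of the three demands on each family: almost-disjointness, the Luzin condition, and the prescription that intersection sizes land in the target set $P_\xi$. The Luzin condition is the delicate one because it insists intersections with earlier members be cofinally large (not eventually contained in any fixed $n$), while I also need them to take specific sizes drawn from $P_\xi$ and to keep the family almost disjoint. I would handle this by a careful bookkeeping recursion that builds each $A^\xi_\alpha$ as a union of blocks, placing controlled-size overlaps with earlier sets in disjoint coordinate intervals so that the overlaps do not interfere with one another; the infinitude of $P_\xi$ guarantees I never run out of admissible sizes, and spreading overlaps across fresh intervals preserves both almost-disjointness and the growth needed for the Luzin property. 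Verifying that this bookkeeping can be carried out at every stage $\alpha<\omega_1$ is the crux of the argument.
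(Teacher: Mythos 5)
Your proposal is correct and follows essentially the same route as the paper: fix $\mathfrak{c}$ pairwise almost oscillating sets via Proposition \ref{P2}, build for each one a Luzin family whose pairwise intersection sizes land in that set, and conclude via Corollary \ref{C2} (with Lemma \ref{L0} supplying the induced bijection) that the $\Psi$-spaces are pairwise non-homeomorphic. The bookkeeping you flag as the crux is handled in the paper by first thinning each oscillating set to one with $k_n>\sum_{i<n}k_i$ and then choosing, at stage $\alpha$, blocks $a_n\subseteq B_n\setminus\bigcup_{i<n}B_i$ with $|(\bigcup_{i\le n}a_i)\cap B_n|=k_n$ --- the same disjoint-blocks idea you describe.
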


\begin{proof}
Given $L = \{ k_n : n\in \omega  \} \subseteq \omega$ such that $k_n > \sum_{i < n} k_i$, construct a Luzin family $\ada_L$ as follow: Choose a partition $\{A_n : n\in \omega \}$ of $\omega$ into infinite sets. Suppose constructed $A_\beta$, $\beta < \alpha$. Enumerate $\{A_\beta : \beta < \alpha \}$ as $\{ B_n : n\in \omega \}$  and for each $n \in \omega$, pick $a_n \subseteq B_n \setminus \bigcup_{i < n} B_i$ such that $|(\bigcup_{i \leq n} a_i ) \cap B_n| = k_n$. Let $A_\alpha = \bigcup_{n\in \omega} a_n$ and $\ada_L=\{A_\alpha:\alpha<\omega_1\}$. Observe that for any $\omega < \alpha, \beta < \omega_1$, there is $n\in \omega$ with  $|A_\alpha \cap A_\beta| = k_n$. It is easy to see that $\ada_A = \{ A_\alpha : \omega < \alpha < \omega_1 \}$ is a Luzin family. Construct all the Luzin families below from the same partition $\{A_n : n\in \omega \}$.

Now, by Proposition \ref{P2}, let $\{ P_\alpha : \alpha < \mathfrak{c} \}$ be a pairwise almost oscillating family of sets of $\omega$. For every $\alpha < \mathfrak{c}$, let $Q_\alpha = \{ q_n^\alpha : n\in \omega \} \subseteq P_\alpha$ such that for every $n\in \omega$, $q_n ^\alpha > \sum_{i<n} q_i^\alpha$. Notice that $\{Q_\alpha : \alpha < \mathfrak{c} \}$ is also a pairwise almost oscillating family of sets of $\omega$. By Corollary \ref{C2}, $\{ \ada_{Q_\alpha} : \alpha < \mathfrak{c}\}$ is the desire collection of Luzin families.
\end{proof}

\begin{theorem}\label{T2}
Given a cardinal $\kappa \leq \mathfrak{c}$ of uncountable cofinality, there are $\mathfrak{c}$ different homeomorphic subsets of $\cantor$ of size $\kappa$ with non-homeomorphic $\Psi$-spaces.
\end{theorem}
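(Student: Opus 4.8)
The plan is to fix a single template set $X^* \subseteq \cantor$ with $|X^*| = \kappa$ and to produce the $\mathfrak{c}$ many required sets as topological copies of $X^*$, re-embedded into $\cantor$ so that their branches split only at prescribed levels. Recall that for a branch family $\ada_X$ and distinct $x, y \in X$ one has $\gorro{x} \cap \gorro{y} = \{ x \rest k : k \leq \Delta(x,y) \}$, where $\Delta(x,y)$ is the first level at which $x$ and $y$ differ, so that $|\gorro{x} \cap \gorro{y}| = \Delta(x,y) + 1$. Hence the set $\{ |\gorro{x} \cap \gorro{y}| : x \neq y \in X \}$ is completely determined by the splitting levels of $X$, and the whole problem reduces to controlling these levels while keeping the homeomorphism type of $X$ fixed.

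The device for this is a padding (stretching) embedding. Fix a pairwise almost oscillating family $\{ P_\alpha : \alpha < \mathfrak{c} \}$ as given by Proposition \ref{P2}; passing to infinite subsets we may assume each $P_\alpha \subseteq [1,\omega)$ and enumerate it increasingly as $\{ q^\alpha_k : k \in \omega \}$. For each $\alpha$ put $\ell^\alpha_k = q^\alpha_k - 1$, a strictly increasing sequence, and define $e_\alpha : \cantor \to \cantor$ by letting $e_\alpha(x)$ have the value $x(k)$ at coordinate $\ell^\alpha_k$ and the value $0$ at every coordinate outside $\{ \ell^\alpha_k : k \in \omega \}$. Each coordinate of $e_\alpha(x)$ depends on at most one coordinate of $x$, so $e_\alpha$ is continuous and injective; since $\cantor$ is compact Hausdorff, $e_\alpha$ is a homeomorphism onto its image. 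I would then set $X_\alpha = e_\alpha[X^*]$, so that $X_\alpha \subseteq \cantor$ is homeomorphic to $X^*$ (whence all the $X_\alpha$ are pairwise homeomorphic) and $|X_\alpha| = \kappa$. The point of the padding is the scaling identity $\Delta(e_\alpha(x), e_\alpha(y)) = \ell^\alpha_{\Delta(x,y)}$, valid because the constant coordinates neither merge distinct branches nor create new splits: two image branches agree below level $\ell^\alpha_{\Delta(x,y)}$ and differ exactly there.

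Combining the scaling identity with the computation of intersection sizes gives
\[
\{ |\gorro{u} \cap \gorro{v}| : u \neq v \in X_\alpha \} = \{ \ell^\alpha_{\Delta(x,y)} + 1 : x \neq y \in X^* \} \subseteq \{ \ell^\alpha_k + 1 : k \in \omega \} = P_\alpha .
\]
Since being almost oscillating is inherited by subsets (both the tail bound and the oscillation clause survive restriction), the sets $\{ |\gorro{u} \cap \gorro{v}| : u \neq v \in X_\alpha \}$, $\alpha < \mathfrak{c}$, are pairwise almost oscillating, being contained in the pairwise almost oscillating $P_\alpha$. To conclude that the $\Psi$-spaces are pairwise non-homeomorphic, suppose $H : \Psi(\ada_{X_\alpha}) \to \Psi(\ada_{X_\beta})$ were a homeomorphism for some $\alpha \neq \beta$. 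By Lemma \ref{L0} it maps $\omega$ onto $\omega$ and restricts to a bijection $h = H \rest \ada_{X_\alpha} : \ada_{X_\alpha} \to \ada_{X_\beta}$, so $H$ extends $h$; but the two intersection-size sets are almost oscillating, so Corollary \ref{C2} forbids any homeomorphism extending $h$ (this is where the uncountable cofinality of $\kappa$ enters, through the Section 2 lemmas). This contradiction shows $\{ X_\alpha : \alpha < \mathfrak{c} \}$ is as required.

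The main obstacle, and the one genuinely new ingredient beyond Section 2, is the construction of the embeddings $e_\alpha$: one needs a single source of branching data (the template $X^*$) that can be simultaneously relocated so that all of its splitting levels fall inside a prescribed almost oscillating set, and relocated by a map that is a homeomorphism onto its image. The padding map achieves both at once, and the only points requiring care are verifying that inserting constant coordinates neither identifies distinct branches nor introduces spurious splits (so that the scaling identity holds verbatim) and that one may thin each $P_\alpha$ to an increasing enumeration contained in $[1,\omega)$ without destroying the almost oscillating property. Everything else is a direct appeal to Proposition \ref{P2}, Lemma \ref{L0}, and Corollary \ref{C2}.
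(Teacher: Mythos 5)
Your proof is correct and follows essentially the same route as the paper: your padding embeddings $e_\alpha$ are just an explicit parametrization of the branch spaces of the trees $S_{P_\alpha}$ (which branch only at levels in $P_\alpha$) that the paper uses, and both arguments then conclude via Proposition \ref{P2}, Lemma \ref{L0}, and Corollary \ref{C2}. The only difference is cosmetic: you fix one template $X^*$ and push it forward, which makes the pairwise homeomorphism of the $X_\alpha$ slightly more explicit than the paper's ``choose $X_\alpha \simeq X_\beta$'' step.
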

\begin{proof}
Given $A \in [\omega]^\omega$, consider the tree $S_A \subseteq \dosfin$ defined by $\emptyset \in S_A$ and
$$  s\in Lev_n (S_A) \implies (s^\frown 1 \in S_A) \wedge (s^\frown 0 \in S_A \longleftrightarrow n \in A).$$

Let $X$ be any subset of size $\kappa$ of the set of branches $[S_A] \subseteq \cantor$. Notice that for all $x,y\in X$, $\Delta (x,y) = |\gorro{x}\cap \gorro{y}| \in A$. 

Again, by Proposition \ref{P2}, let $\{ P_\alpha : \alpha < \mathfrak{c} \}$ be a pairwise almost oscillating family of sets of $\omega$. Note that if $A, B  \in [\omega]^\omega$, then $[S_A] \simeq [S_B] \simeq\cantor$ and $A \cap B =^* \emptyset$ implies that $|[S_A] \cap [S_B]| < \omega$. Hence, we can choose $X_\alpha \in [[S_{P_\alpha}]]^{\kappa}$ such that the $X_\alpha$'s are all different, but $X_\alpha \simeq X_\beta$, whenever $\alpha, \beta < \mathfrak{c}$. By Corollary \ref{C2}, $\{ X_{\alpha} : \alpha < \mathfrak{c}\}$ is the desire collection of subsets of $\cantor$. 
\end{proof}


\begin{coro}\label{C3}
Let $\ada$ be an AD family of size $\kappa$. If there are $\ada_0, \ada_1 \in [\ada]^{\kappa}$ such that $\{ |x\cap y| : x,y \in \ada_0 \}$ and $\{ |x\cap y| : x,y \in \ada_1 \}$ are almost oscillating, then $\Psi (\ada) \not\simeq \Psi (\ada_0)$.
\end{coro}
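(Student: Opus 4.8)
The plan is to argue by contradiction, reducing to the machinery already developed for bijections between AD families whose intersection‑size sets are almost oscillating. So suppose there were a homeomorphism $H \colon \Psi(\ada) \to \Psi(\ada_0)$. By Lemma \ref{L0} we have $H[\omega] = \omega$, so $H$ carries non‑isolated points to non‑isolated points and $h := H \rest \ada \colon \ada \to \ada_0$ is a bijection; moreover $H[A] =^* H(A)$ for every $A \in \ada$. The key idea is that although $\ada_0$ itself need not have an almost‑oscillating intersection‑size set, the \emph{other} subfamily $\ada_1$ does, and $h$ is forced to carry $\ada_1$ into $\ada_0$.

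Concretely, I would first restrict $h$ to $\ada_1 \in [\ada]^\kappa$, obtaining a bijection $h \rest \ada_1 \colon \ada_1 \to h[\ada_1]$ with $h[\ada_1] \subseteq \ada_0$. Since $h[\ada_1] \subseteq \ada_0$, the set $\{\,|x \cap y| : x,y \in h[\ada_1]\,\}$ is contained in $\{\,|x \cap y| : x,y \in \ada_0\,\}$, and being oscillating is plainly inherited by subsets (there are only fewer pairs to check). Hence $\{\,|x \cap y| : x,y \in \ada_1\,\}$ and $\{\,|x \cap y| : x,y \in h[\ada_1]\,\}$ are still almost oscillating. Now Corollary \ref{C1} applies to $h \rest \ada_1$ and yields $\ada' \in [\ada_1]^\kappa \subseteq [\ada]^\kappa$ on which $h \rest \ada'$ is of dense oscillation.

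It then remains to extract a contradiction from the existence of the \emph{global} homeomorphism $H$ together with the dense oscillation of $h \rest \ada'$, which is precisely the content of Proposition \ref{P1} (equivalently, the closing step of Corollary \ref{C2}). In detail I would re‑run the proof of Proposition \ref{P1}: using $H[A] =^* H(A)$ for $A \in \ada'$, define the finite sets $F_A, G_A$, thin $\ada'$ to $\ada'' \in [\ada']^\kappa$ on which $F_A$ and $G_A$ are constant, and conclude $|x \cap z \setminus x \cap y| = |H(x)\cap H(z) \setminus H(x)\cap H(y)|$ for all $x,y,z \in \ada''$. Feeding the subfamily $\ada''$ into the dense oscillation of $h \rest \ada'$ produces $x,y,z \in \ada''$ for which this equality fails, the desired contradiction.

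The main obstacle I anticipate is the temptation to quote Corollary \ref{C2} verbatim by claiming that $H$ restricts to a homeomorphism between the subspaces $\Psi(\ada_1)$ and $\Psi(h[\ada_1])$. This is delicate: $H$ only satisfies $H[A] =^* H(A)$, so the image $H\big[\bigcup_{A \in \ada_1} A\big]$ of the isolated points of $\Psi(\ada_1)$ need not coincide with $\bigcup_{B \in h[\ada_1]} B$, since finitely many points per member may be gained or lost and these exceptions can accumulate. For this reason I prefer to invoke Proposition \ref{P1} at the level of the global map $H$, whose hypothesis $H[A] =^* H(A)$ holds for \emph{all} $A \in \ada$, thereby sidestepping whether $H$ carries $\Psi(\ada_1)$ exactly onto a $\Psi$‑space on $h[\ada_1]$. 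A secondary point to get right is which subfamily to restrict to: to exclude a homeomorphism onto $\Psi(\ada_0)$ one pits the distinct intersection‑size sets against each other by restricting $h$ to $\ada_1$ (whose image lands in $\ada_0$); restricting instead to $\ada_0$ gives the symmetric conclusion $\Psi(\ada) \not\simeq \Psi(\ada_1)$, so the hypothesis in fact separates $\Psi(\ada)$ from both $\Psi(\ada_0)$ and $\Psi(\ada_1)$.
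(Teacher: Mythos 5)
Your proposal is correct and follows essentially the same route as the paper: restrict the induced bijection so that one side lies in $\ada_1$ and the other in $\ada_0$, observe that almost oscillation passes to subsets, and then apply Corollary \ref{C1} followed by Proposition \ref{P1} (the paper phrases this with the bijection running $\ada_0 \to \ada$ and restricts to $h^{-1}[\ada_1]$, but this is the same argument). Your extra care about invoking Proposition \ref{P1} at the level of the global homeomorphism rather than literally restricting to a $\Psi$-subspace is a reasonable refinement of the same idea, not a different proof.
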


\begin{proof}
If $h : \ada_0 \to \ada$ is a bijection, by Corollary \ref{P1} there is $\ada_0' \in [h^{-1}[\ada_1]]^{\kappa}$ such that $h\rest_{\ada_0'} : \ada_0' \to h[\ada_0']$ is of dense oscillation. Hence, there can not be a homeomorphism between $\Psi (\ada_0 ')$ and $\Psi (h[\ada_0'])$ that extends $h\rest_{\ada_0'}$, then neither one between $\Psi (\ada_0)$ and $\Psi (\ada)$ that extends $h$.
\end{proof}

Motivated by Corollary \ref{C3}, a positive answer to the following question implies a positive answer to Question \ref{Q2}. However, we do not even know if $\mathsf{CH}$ answers:

\begin{pregunta}\label{Q3}
Let $\ada$ be an AD family on $\omega$ of size $\omega_1$. Are there $\ada_0, \ada_1 \in [\ada]^{\omega_1}$ such that $\{ |x\cap y| : x,y \in \ada_0 \}$ and $\{ |x\cap y| : x,y \in \ada_1 \}$ are almost oscillating?
\end{pregunta}

The arguments under $\mathsf{CH}$ below Question \ref{Q2} say that if $\ada$ is an uncountable AD family, then there is $\ada_0$ such that $\Psi (\ada) \not\simeq \Psi (\ada_0)$. However, the sets $\{ |x\cap y| : x,y \in \ada_0 \}$ and $\{ |x\cap y| : x,y \in \ada \}$ are far from being almost oscillating (the first is contained in the second).


\end{document}